\newtheorem{main}{Theorem}
\newtheorem{mainCor}[main]{Corollary} 
\newtheorem{theorem}[equation]{Theorem}
\newtheorem{lemma}[equation]{Lemma}
\newtheorem{proposition}[equation]{Proposition}
\newtheorem{corollary}[equation]{Corollary}
\theoremstyle{definition}
\newtheorem{definition}[equation]{Definition}
\newtheorem{remark}[equation]{Remark}
\numberwithin{equation}{section}
\newcommand{\arrowo}{\overrightarrow{\mathcal{O}}}
\newcommand{\M}{\mathsf{M}}
\newcommand{\sD}{\mathscr{D}}
\DeclareMathOperator{\colim}{colim}
\DeclareMathOperator{\Arr}{Arr}
\newcommand{\po}{\ar@{}[dr]|(.7){\Searrow}}
\newcommand{\pb}{\ar@{}[dr]|(.3){\Nwarrow}}
\newcommand{\Ch}{\mathsf{Ch}}
\newcommand{\Chz}{\Ch(\mathbb{Z})}
\newcommand{\cat}[1]{\mathcal{#1}}
\newcommand{\boxprod}{\mathbin\square}
\newcommand{\pcorner}{\circledast}
\newcommand{\coprodover}[1]{\underset{#1}{\coprod}}
\tikzset{auto}
\tikzset{empty/.style={circle,inner sep=0pt,minimum size=6mm}}
\tikzset{emptyvt/.style={circle,inner sep=0pt,minimum size=0mm}}
\tikzset{plain/.style={circle,draw,very thick,
inner sep=0pt,minimum size=6mm}}
\tikzset{fatplain/.style={rounded rectangle,draw,very thick,minimum size=6mm}}
\tikzset{bigplain/.style={rounded rectangle,draw,very thick,minimum size=.8cm}}
\tikzset{yellowvt/.style={circle,draw,fill=yellow,very thick,inner sep=0pt,minimum size=6mm}}
\tikzset{bluevt/.style={circle,draw,fill=blue!20,very thick,inner sep=0pt,minimum size=6mm}}
\tikzset{greenvt/.style={circle,draw,fill=green!30,very thick,inner sep=0pt,minimum size=6mm}}
\tikzset{redvt/.style={circle,draw,fill=red!30,very thick,inner sep=0pt,minimum size=6mm}}
\tikzset{arrow/.style={->,thick}}
\tikzset{dashedarrow/.style={->,dashed,thick}}
\tikzset{dottedarrow/.style={->,dotted,thick}}
\tikzset{mapto/.style={|->,thick}}
\tikzset{implies/.style={thick,double,double equal sign distance,-implies}}
\tikzset{line/.style={thick}}
\tikzset{dottedline/.style={dotted,thick}}
\tikzset{dashedline/.style={dashed,thick}}
\tikzset{inputleg/.style={<-,thick}}
\tikzset{outputleg/.style={->,thick}}
\tikzset{dottedinput/.style={<-,dotted,thick}}
\newcommand{\adjoint}{\hspace{-.1cm}
\nicearrow\xymatrix{ \ar@<2pt>[r] & \ar@<2pt>[l]}\hspace{-.1cm}}
\renewcommand{\hookrightarrow}{\nicexy{\ar@{^{(}->}[r] &}}
\newcommand{\nicearrow}{\SelectTips{cm}{10}}
\newcommand{\nicexy}{\nicearrow\xymatrix@C+10pt@R+8pt}
\newcommand{\narrowxy}{\nicearrow\xymatrix@R+10pt}
\renewcommand{\to}{\hspace{-.1cm}\nicearrow\xymatrix@C-.3cm{\ar[r]&}\hspace{-.1cm}}
\newcommand{\tensorunit}{\mathbb{1}}
\newcommand{\Cat}{\mathsf{Cat}}
\renewcommand{\sD}{\mathsf{D}}
\newcommand{\Top}{\mathsf{Top}}
\newcommand{\spancat}{\{\hspace{-.1cm}\nicexy@C-.8cm{-1 & 0 \ar[l] \ar[r] & 1}\hspace{-.1cm}\}}
\newcommand{\proj}{{\scalebox{.5}{$\mathrm{proj}$}}}
\newcommand{\arrowm}{\overrightarrow{\M}}
\newcommand{\arrowmproj}{\arrowm_{\proj}}
\newcommand{\arrowmpp}{\arrowm^{\square}}
\newcommand{\arrowmppproj}{\arrowmpp_{\proj}}
\renewcommand{\lim}{\mathsf{lim}\,}
\DeclareMathOperator{\Ev}{Ev}
\DeclareMathOperator{\Hom}{Hom}
\DeclareMathOperator{\Id}{Id}
\begin{document}

\title{Arrow Categories of Monoidal Model Categories}

\author{David White}
\address{Denison University
\\ Granville, OH}
\email{david.white@denison.edu}

\author{Donald Yau}
\address{The Ohio State University at Newark \\ Newark, OH}
\email{yau.22@osu.edu}

\begin{abstract}
We prove that the arrow category of a monoidal model category, equipped with the pushout product monoidal structure and the projective model structure, is a monoidal model category. This answers a question posed by Mark Hovey, in the course of his work on Smith ideals. As a corollary, we prove that the projective model structure in cubical homotopy theory is a monoidal model structure.
 As illustrations we include numerous examples of non-cofibrantly generated monoidal model categories, including chain complexes, small categories, pro-categories, and topological spaces.
\end{abstract}

\maketitle


\section{Introduction}

Based on an unpublished talk by Jeff Smith, in \cite{hovey-smith} Hovey developed a homotopy theory of Smith ideals in monoidal model categories.  Given a symmetric monoidal closed category $\M$, its arrow category Arr($\M)=\arrowm$ becomes a symmetric monoidal closed category when equipped with the pushout product monoidal structure, denoted $\arrowmpp$.  A Smith ideal in $\M$ is  defined as a monoid in $\arrowmpp$.  Equivalently, a Smith ideal is a triple $(R,I,j)$ with $R$ a monoid in $\M$, $I$ an $R$-bimodule, and $j : I \to R$ a map of $R$-bimodules that satisfies an extra compatibility condition.

If $\M$ has a (cofibrantly generated) model structure, then its arrow category inherits a (cofibrantly generated) projective model structure with weak equivalences and fibrations defined entrywise in $\M$.  This is because the arrow category is the category of functors from $I = \{0 \to 1\}$, which is a direct category, to $\M$.  A monoidal model category \cite{ss} is a symmetric monoidal closed category with a model structure that satisfies the pushout product axiom.  In \cite{hovey-smith} (3.1(5)) Hovey showed that if $\M$ is a cofibrantly generated monoidal model category, then its arrow category equipped with the pushout product monoidal structure and the  projective model structure is a monoidal model category.  Furthermore, Hovey expressed the belief that $\arrowmpp$ should be a monoidal model category even without assuming cofibrant generation on $\M$.  Such a result was first achieved in \cite{ps} (Prop. 3.1.8) with an inductive argument that involves decomposing certain maps into composites of pushouts.  The purpose of this short note is to reprove this result with a direct, non-inductive argument, and to give numerous applications of this result.

\begin{main} \label{thm:main}
Suppose $\M$ is a monoidal model category.  Then its arrow category equipped with the pushout product monoidal structure and the projective model structure is a monoidal model category.
\end{main}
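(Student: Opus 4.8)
The plan is to verify the two conditions defining a monoidal model category for the arrow category $\arrowmpp$ with the projective model structure: the pushout product axiom, and the unit axiom. Since the model structure on $\arrowm$ is created entrywise (weak equivalences and fibrations of $I = \{0 \to 1\}$-diagrams are detected at each object), and since cofibrant generation is no longer available, the key is to describe the (acyclic) cofibrations of $\arrowmproj$ intrinsically rather than as retracts of transfinite compositions of pushouts of generating maps. The standard fact is that a map $(A_0 \to A_1) \to (B_0 \to B_1)$ is a projective cofibration precisely when $A_0 \to B_0$ is a cofibration in $\M$ and the relative latching map $A_1 \sqcup_{A_0} B_0 \to B_1$ is a cofibration in $\M$, with the analogous statement (same maps, now acyclic cofibrations) for acyclic projective cofibrations. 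First I would record this characterization, which follows from the direct category structure on $I$.

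For the pushout product axiom, let $f \colon X \to Y$ and $g \colon U \to V$ be projective cofibrations in $\arrowm$, and consider the pushout product $f \boxprod g$ formed with respect to the pushout product monoidal structure $\boxprod$ on $\arrowm$. I would compute the two entries of $f \boxprod g$ and its relative latching map directly in terms of the entries $f_0, f_1', g_0, g_1'$ (where primes denote relative latching maps in $\M$) using the fact that the pushout product monoidal structure on $\arrowm$ is built from $\otimes$ and the pushout product $\pcorner$ in $\M$. The claim is that the entry-$0$ map of $f \boxprod g$ is $f_0 \pcorner g_0$, and its relative latching map is an iterated pushout product expressible via $f_0 \pcorner g_0$, $f_0 \pcorner g_1'$, $f_1' \pcorner g_0$, and $f_1' \pcorner g_1'$ — i.e. a "cube" of pushout products in $\M$. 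Since $\M$ satisfies the pushout product axiom, each of $f_i' \pcorner g_j'$ is a cofibration in $\M$ (and acyclic if either factor is acyclic), and a pushout product of cofibrations is again a cofibration; hence both the entry-$0$ map and the relative latching map of $f \boxprod g$ are cofibrations in $\M$, so $f \boxprod g$ is a projective cofibration. The acyclicity clause follows the same way, using that $\M$'s pushout product axiom upgrades acyclicity.

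For the unit axiom: the monoidal unit of $\arrowmpp$ is the arrow $\boneempty \colon \varnothing \to \tensorunit$ (initial object to tensor unit in $\M$), and one must check that for a cofibrant replacement $q \colon \widetilde{\bone} \to \boneempty$ in $\arrowmproj$, tensoring with any cofibrant object of $\arrowm$ gives a weak equivalence. Here I would use that a cofibrant object of $\arrowmproj$ is an arrow $A_0 \to A_1$ with $A_0$ cofibrant in $\M$ and $A_0 \to A_1$ a cofibration (hence $A_1$ cofibrant) in $\M$, reduce the $\boxprod$ with the unit-replacement to entrywise statements, and invoke the unit axiom of $\M$ at each entry together with the gluing lemma.

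I expect the main obstacle to be the combinatorial bookkeeping in the pushout product computation: correctly identifying the relative latching map of $f \boxprod g$ as the appropriate iterated pushout product of the four maps $f_i' \pcorner g_j'$ in $\M$, i.e. setting up the $3 \times 3$ (or cubical) diagram of pushouts and verifying that its total relative latching map is a cofibration. This is precisely the place where the earlier inductive arguments decomposed maps into composites of pushouts; the point of the present approach is that once the characterization of projective cofibrations via relative latching maps is in hand, this becomes a finite, explicit diagram chase with no transfinite induction, using only that cofibrations in $\M$ are closed under pushout product, pushout, and composition.
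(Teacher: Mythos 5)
Your overall strategy matches the paper's: characterize projective (trivial) cofibrations in $\arrowmproj$ by the entry-$0$ component together with the relative latching (pushout corner) map, and then identify these two maps for a pushout product $\alpha \boxprod_2 \beta$ in $\arrowmpp$. For the relative latching map your instinct is essentially right, and the answer is even cleaner than you suggest: the paper shows it is isomorphic to the \emph{single} pushout product $(\alpha_1 \pcorner f_W) \boxprod (\beta_1 \pcorner f_Y)$ of the two relative latching maps, so the pushout product axiom of $\M$ applies in one step.

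The genuine gap is your claim that the entry-$0$ component of $\alpha \boxprod_2 \beta$ is $\alpha_0 \pcorner \beta_0$. This is false, because $\Ev_0$ is not monoidal for the pushout product monoidal structure (only $\Ev_1$ is): for objects $f : X_0 \to X_1$ and $g : Y_0 \to Y_1$ one has $\Ev_0(f \boxprod g) = X_1 Y_0 \coprod_{X_0 Y_0} X_0 Y_1$, not $X_0 \otimes Y_0$. Consequently the entry-$0$ component of $\alpha \boxprod_2 \beta$ is a map $\zeta$ between iterated pushouts of tensor products, with target $W_1 Y_0 \coprod_{W_0 Y_0} W_0 Y_1$; it is not a pushout product of the entry-$0$ maps, and it is not a single pushout product of cofibrations at all. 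Identifying $\zeta$ is where the real work of the proof lies: the paper factors $\zeta = \delta_1 \circ \delta_0$, with $\delta_0$ a pushout of $\alpha_1 \boxprod \beta_0$ and $\delta_1$ a pushout of $\alpha_0 \boxprod (\beta_1 \pcorner f_Y)$, i.e.\ the pushout product of the entry-$0$ map of $\alpha$ with the relative latching map of $\beta$; only then do closure of (trivial) cofibrations under pushout product, pushout, and composition finish the argument. Your sketch defers exactly this identification to ``combinatorial bookkeeping'' while asserting an incorrect answer for it, so as written the verification for the entry-$0$ component does not go through. (A minor point: the unit axiom you discuss is not part of the paper's definition of monoidal model category, which follows Schwede--Shipley and requires only the pushout product axiom, so that portion is not needed for the stated theorem.)
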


This result will be proved below as Theorem \ref{arrowcat-ppax}.  The point is that it is not necessary to assume cofibrant generation on $\M$, as in \cite{hovey-smith} (3.1(5)).   The primary difficulty of verifying the pushout product axiom in $\arrowmpp$, with the projective model structure, is computing the pushout product in $\arrowmpp$ in terms of pushout products in $\M$. The pushout product of two morphisms in $\arrowmpp$ is described in (\ref{alpha-box-beta}).

To deal with these categorical and homotopical difficulties, our proof below involves three rewritings of pushout corner maps.  In the first instance \eqref{teresa}, we rewrite a pushout corner map as the induced map between two pushout squares.  In each of the other two instances \eqref{xi-rewrite} and \eqref{alphabetaf-pc}, a pushout corner map is rewritten as a pushout product.

In Section \ref{sec:not-cg} we will provide a list of examples of monoidal model categories that are not cofibrantly generated, including chain complexes, small categories, spaces, and pro-categories. In Section \ref{sec:applications}, we discuss an application of our main result to cubical homotopy theory, a field that has recently been applied in numerous settings, including Goodwillie calculus, homotopy type theory, classical stable and unstable homotopy theory, rewriting theory, concurrency theory, and the homotopy theory of $C^*$-algebras. A corollary of our main result, Corollary \ref{cor:main}, implies that the natural monoidal product in cubical homotopy theory satisfies the pushout product axiom, and hence provides practitioners of cubical homotopy theory with a more powerful set of tools. 

Cubical homotopy theory begins with a sequence of diagram categories $\M^{I}:=\Arr(\M)$, $\M^{I^{\times 2}}:=\Arr(\M^{I}), \dots, \M^{I^{\times n}}, \dots$, each being the arrow category of the previous step. The objects of $\M^{I^{\times n}}$ are commutative $n$-cubes in $\M$ (i.e., functors from $\{0 \to 1\}^{\times n}$ to $\M$) and maps are commutative $(n+1)$-cubes (i.e., natural transformations between functors from $\{0 \to 1\}^{\times n}$ to $\M$). For each of these diagram categories, the projective model structure defines weak equivalences and fibrations pointwise in $\M$. By iterating Theorem \ref{thm:main}, we obtain the following corollary, discussed further in Section \ref{sec:applications}.

\begin{mainCor} \label{cor:main}
Let $\M$ be any monoidal model category (cofibrantly generated or not). Then all of the diagram categories $\M^{I^{\times n}}$, equipped with their projective model structures, are monoidal model categories.
\end{mainCor}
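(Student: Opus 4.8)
The plan is to verify the pushout product axiom for $\arrowmpp$ equipped with the projective model structure, reducing everything to the pushout product axiom in $\M$. Since the projective model structure on $\Arr(\M)$ has weak equivalences and fibrations detected entrywise, a map $\alpha \colon (A_0 \to A_1) \to (B_0 \to B_1)$ is a (trivial) cofibration iff it has the left lifting property against all entrywise (trivial) fibrations; the generating-cofibration-free characterization of such maps is that the entry $\alpha_0 \colon A_0 \to B_0$ is a (trivial) cofibration in $\M$ and the relative latching map — here simply the pushout corner map $A_1 \amalg_{A_0} B_0 \to B_1$ — is a (trivial) cofibration in $\M$. So the first step is to record this characterization (it follows formally from $I = \{0 \to 1\}$ being a direct category with the obvious filtration, and is presumably isolated as a lemma earlier).

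Next I would compute $\alpha \boxprod \beta$ in $\arrowmpp$ explicitly in terms of pushout products in $\M$. Writing $\alpha \colon f \to g$ and $\beta \colon f' \to g'$ as maps of arrows, the object $\alpha \boxprod \beta$ is itself an arrow whose source is the pushout-product object built from the sources/targets at level $0$ and whose target involves level $1$, and the key formula — the one referenced as \eqref{alpha-box-beta} in the excerpt — expresses its two entries and the pushout corner map $\xi$ of the pair $(\alpha\boxprod\beta, \gamma)$ against a third map $\gamma \colon f'' \to g''$ (or against the terminal data, for the axiom one only needs two maps, but the cleanest bookkeeping uses the three rewritings the authors advertise). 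Concretely: apply the characterization above to $\alpha \boxprod \beta$, so it suffices to check that its level-$0$ entry is a (trivial) cofibration and that its pushout corner map is one. The level-$0$ entry is $f_0 \boxprod f'_0$-type data, handled directly by the pushout product axiom in $\M$ applied to the level-$0$ entries of $\alpha$ and $\beta$ (which are themselves cofibrations by the characterization). The pushout corner map is where the three rewritings \eqref{teresa}, \eqref{xi-rewrite}, \eqref{alphabetaf-pc} come in: first express it as the induced map between two pushout squares, then manipulate until it is visibly a pushout product of two maps each already known to be a (trivial) cofibration in $\M$ — at which point the pushout product axiom in $\M$ finishes it.

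The main obstacle, and the technical heart of the argument, is precisely the combinatorial bookkeeping of that pushout corner map: a priori it is an iterated colimit of a fairly large diagram (pushouts of pushouts indexed by the cube $\{0\to1\}^{\times 2}$ crossed with the arrow direction), and the nontrivial content is that, after reassociating colimits and identifying the relevant pushout squares, it collapses to a single pushout product $i \boxprod j$ in $\M$ where $i$ is (built from) a pushout corner map of $\alpha$ and $j$ is one of $\beta$ — both of which are cofibrations by hypothesis, and trivial cofibrations if either $\alpha$ or $\beta$ is. Getting the bracketing of the three successive rewrites exactly right, and checking that each intermediate square really is a pushout (so that the cancellation/pasting lemmas for pushouts apply), is the part that requires care; once the corner map is exhibited as such a pushout product, invoking the pushout product axiom in $\M$ (and its trivial-cofibration half) is immediate, and the unit axiom for $\arrowmpp$ — if needed — follows from the unit axiom in $\M$ since the monoidal unit of $\arrowmpp$ is $(\emptyset \to \tensorunit)$ and tensoring with it is computed entrywise. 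Finally, Corollary \ref{cor:main} is obtained by straightforward induction on $n$: $\M^{I^{\times n}} = \Arr(\M^{I^{\times (n-1)}})$, and $\M^{I^{\times(n-1)}}$ is a monoidal model category by the inductive hypothesis, so Theorem \ref{thm:main} applies verbatim, with the only point to observe being that the projective model structure on $\M^{I^{\times n}}$ agrees with the one obtained by regarding it as $\Arr$ of the projective model structure on $\M^{I^{\times(n-1)}}$, which holds because "entrywise in $\M$" and "entrywise in $\M^{I^{\times(n-1)}}$, then entrywise in $\M$" describe the same weak equivalences and fibrations.
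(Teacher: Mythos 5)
Your proposal is correct and follows essentially the same route as the paper: Corollary~\ref{cor:main} is obtained by induction on $n$, applying Theorem~\ref{thm:main} to $\M^{I^{\times (n-1)}}$ after identifying $\M^{I^{\times n}}$ with $\Arr(\M^{I^{\times (n-1)}})$ and noting that the projective model structure defined entrywise in $\M$ coincides with the projective model structure on the arrow category of $\M^{I^{\times (n-1)}}$, since both have the same entrywise weak equivalences and fibrations. Your accompanying recap of the proof of Theorem~\ref{thm:main} also matches the paper's strategy (the entrywise/corner-map characterization of projective cofibrations plus the three rewritings), apart from the minor point that the level-$0$ entry $\zeta$ of $\alpha \boxprod_2 \beta$ is not literally $\alpha_0 \boxprod \beta_0$ and itself requires two of the three rewritings, but this does not affect the corollary.
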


Another motivation for Theorem \ref{thm:main} is that it is needed in a companion paper \cite{white-yau6}, that uses the theory of \cite{white-yau1} to extend Hovey's work on Smith ideals of ring spectra \cite{hovey-smith} to the context of algebras over general operads, rather than simply monoids. This work builds on \cite{white-commutative}, where commutative Smith ideals were introduced, as commutative monoids in $\arrowmproj$. Using Theorem \ref{thm:main}, we are able to lift operads $\cat O$ to new operads $\arrowo$ that act in $\arrowmproj$. We then work out the homotopy theory of $\arrowo$-algebras, generalizing results of Hovey \cite{hovey-smith}, proving new results regarding left Bousfield localization, and setting up a theory of Smith $\cat O$-ideals in spectra, chain complexes, and the stable module category.

\section{Projective Model Structure on the Arrow Category}
\label{sec:model-arrow}

In this section we briefly recall some definitions and facts regarding monoidal model categories and arrow categories.  Our main references for model categories are \cite{hirschhorn,hovey,ss}.  In this paper, $(\M, \otimes, \tensorunit, \Hom)$  will be a bicomplete symmetric monoidal closed category with monoidal unit $\tensorunit$ and initial object $\varnothing$.

\begin{definition}\label{ppax}
\begin{enumerate}
\item A model category is \emph{cofibrantly generated} if there are a set $I$ of cofibrations and a set $J$ of trivial cofibrations (i.e., maps that are both cofibrations and weak equivalences) that permit the small object argument (with respect to some cardinal $\kappa$), and a map is a (trivial) fibration if and only if it satisfies the right lifting property with respect to all maps in $J$ (resp., $I$).
\item A symmetric monoidal closed category $\M$ equipped with a model structure is called a \emph{monoidal model category} if it satisfies the following \emph{pushout product axiom} \cite{ss} (3.1): 
\begin{itemize}
\item Given any cofibrations $f:X_0\to X_1$ and $g:Y_0\to Y_1$, the pushout product map
\[\nicexy{(X_0\otimes Y_1) \coprodover{X_0\otimes Y_0} (X_1\otimes Y_0) 
\ar[r]^-{f\boxprod g} & X_1\otimes Y_1}\]
is a cofibration. If, in addition, either $f$ or $g$ is a weak equivalence then $f\boxprod g$ is a trivial cofibration.
\end{itemize}
\end{enumerate}
\end{definition}

We now recall the arrow category and its projective model structure from \cite{hovey-smith}.

\begin{definition}
\begin{enumerate}
\item Given a solid-arrow commutative diagram
\[\nicexy@R-.5cm{A \ar[d] \ar[r] \ar@{}[dr]|-{\mathrm{pushout}} & C \ar[d] \ar@/^1pc/[ddr]|-{g} & \\ 
B \ar[r] \ar@/_1pc/[drr]|-{f} & B \coprodover{A} C 
 \ar@{}[dr]^(.15){}="a"^(.9){}="b" \ar@{.>} "a";"b" |-{f \pcorner g} & \\ && D}\]
in $\M$ in which the square is a pushout, the unique dotted induced map--i.e., the \emph{pushout corner map}--will be denoted by $f \pcorner g$.  The only exception to this notation is when the pushout corner map is actually a pushout product of two maps, in which case we keep the box notation in Def. \ref{ppax}.
\item The \emph{arrow category} $\arrowm$ is the category whose objects are maps in $\M$, in which a map $\alpha : f \to g$ is a commutative square
\begin{equation}\label{map-in-arrowcat}
\nicexy@R-10pt{X_0 \ar[r]^-{\alpha_0} \ar[d]_f & Y_0 \ar[d]^-{g}\\ 
X_1 \ar[r]^-{\alpha_1} & Y_1}
\end{equation}
in $\M$.  We will also write $\Ev_0f = X_0$, $\Ev_1f = X_1$, $\Ev_0 \alpha = \alpha_0$, and $\Ev_1 \alpha = \alpha_1$.
\item The \emph{pushout product monoidal structure} on $\arrowm$ is given by the pushout product
\[\nicexy{(X_0 \otimes Y_1) \coprodover{X_0 \otimes Y_0} (X_1 \otimes Y_0) \ar[r]^-{f \boxprod g} & X_1 \otimes Y_1}\]
for $f : X_0 \to X_1$ and $g : Y_0 \to Y_1$.  The arrow category equipped with this monoidal structure is denoted by $\arrowmpp$.  Its monoidal unit is $\varnothing \to \tensorunit$, and it is a symmetric monoidal closed category.
\item Defining $L_0(X) = (\Id : X \to X)$ and $L_1(X) = (\varnothing \to X)$ for $X \in \M$, there are adjunctions
\begin{equation}\label{lev}
\nicexy{\M \ar@<2pt>[r]^-{L_0} & \arrowm \ar@<2pt>[l]^-{\Ev_0} & \M \ar@<2pt>[r]^-{L_1} & \arrowm \ar@<2pt>[l]^-{\Ev_1}}
\end{equation}
with left adjoints on top.
\end{enumerate}
\end{definition}

Of course, the arrow category is also the category of functors from the category $\{0 \to 1\}$, with two objects and one non-identity arrow, to $\M$.  The following result about the projective model structure is from \cite{hovey-smith} (3.1).

\begin{theorem}\label{hovey-projective}
Suppose $\M$ is a model category.
\begin{enumerate}
\item There is a model structure on $\arrowm$, called the \emph{projective model structure}, in which a map $\alpha : f \to g$ as in \eqref{map-in-arrowcat} is a weak equivalence (resp., fibration) if and only if $\alpha_0$ and $\alpha_1$ are weak equivalences (resp., fibrations) in $\M$.  A map $\alpha$ is a (trivial) cofibration if and only if $\alpha_0$ and the pushout corner map
\[\nicexy{X_1 \coprodover{X_0} Y_0 \ar[r]^-{\alpha_1 \pcorner g} & Y_1}\]
are (trivial) cofibrations in $\M$.  The arrow category equipped with the projective model structure is denoted by $\arrowmproj$.
\item If $\M$ is a cofibrantly generated monoidal model category, then $\arrowmpp$ equipped with the projective model structure is a monoidal model category.
\end{enumerate}
\end{theorem}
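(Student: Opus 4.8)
The plan is to handle the two parts of Theorem~\ref{hovey-projective} separately. For part (1), I would obtain the projective model structure as the Reedy model structure: since $\{0\to 1\}$ is a direct category, the diagram category $\arrowm = \M^{\{0\to 1\}}$ carries a Reedy model structure for \emph{any} model category $\M$ (see \cite{hirschhorn,hovey}), and because the indexing category has no non-identity degree-lowering maps its matching objects are terminal, so the Reedy fibrations and weak equivalences are exactly the objectwise ones, i.e.\ those detected by $\Ev_0$ and $\Ev_1$. The stated characterization of (trivial) cofibrations then drops out of the relative latching maps: at the object $0$ the latching object is initial, giving the condition that $\alpha_0$ be a (trivial) cofibration; at the object $1$ the latching object of the target $g$ is $Y_0$, so the relative latching map is precisely the pushout corner map $X_1\coprodover{X_0}Y_0 \to Y_1$.

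For part (2), the first step is to record generating sets for $\arrowmproj$, following \cite{hovey-smith} (3.1): if $I$ and $J$ generate the cofibrations and trivial cofibrations of $\M$, then $L_0 I \cup L_1 I$ and $L_0 J \cup L_1 J$ generate those of $\arrowmproj$, where $L_0,L_1$ are the left adjoints of \eqref{lev}; this is immediate from the two adjunctions and the objectwise description of fibrations. Since $\M$ — and hence $\arrowmproj$ — is assumed cofibrantly generated, I would then invoke the standard reduction for cofibrantly generated monoidal model categories (see \cite{hovey,ss}): the pushout product axiom holds provided it holds on pairs of generators. So the task reduces to identifying $L_a(u)\boxprod L_b(v)$, for generators $u,v$ and $a,b\in\{0,1\}$, as a morphism of $\arrowmproj$.

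The crux is a short computation: working directly from the definition of $\boxprod$ and using $\varnothing\otimes(-)=\varnothing$, one produces natural isomorphisms $L_0 X \boxprod L_0 Y \cong L_0(X\otimes Y)$, $L_1 X \boxprod L_1 Y \cong L_1(X\otimes Y)$, and $L_0 X\boxprod L_1 Y \cong L_0(X\otimes Y)$, so that $L_0$ and $L_1$ preserve binary tensor products. As $L_0$ and $L_1$ are cocontinuous (being left adjoints), they therefore preserve pushout products, and hence $L_a(u)\boxprod L_b(v)$ is, in each of the cases above, $L_0$ or $L_1$ applied to $u\boxprod v$. To conclude, I would use the pushout product axiom in $\M$ — so $u\boxprod v$ is a cofibration, and a trivial cofibration when $u$ or $v$ is a weak equivalence — together with the fact that $L_0$ and $L_1$ are left Quillen, their right adjoints $\Ev_0,\Ev_1$ preserving fibrations and trivial fibrations since these are objectwise. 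Thus $L_a(u\boxprod v)$ is a (trivial) cofibration in $\arrowmproj$, which verifies the pushout product axiom on generators.

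The step I expect to be the main obstacle is the bookkeeping in the mixed case $L_0(u)\boxprod L_1(v)$: one must expand the pushout product of two morphisms of $\arrowmpp$, which is itself a pushout corner map assembled from two pushout squares in $\M$, and check that the identity legs of $L_0(-)$ together with $\varnothing\otimes(-)=\varnothing$ collapse it to a single copy of $L_0(u\boxprod v)$. Everything else is formal once one knows that $L_0$ and $L_1$ preserve tensor products and are left Quillen. It is worth emphasizing that the entire argument hinges on the reduction to generators, and it is exactly this step that fails when $\M$ is not cofibrantly generated — which is what necessitates the direct argument developed in the remainder of the paper.
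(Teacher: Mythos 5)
Your proposal is correct and follows essentially the route the paper itself relies on: the paper does not reprove this theorem but cites Hovey (5.1.3) for the Reedy/direct-category model structure on $\M^{\{0\to1\}}$ (which is exactly your latching-object computation for part (1)) and Hovey--Smith (3.1) for part (2), whose argument is precisely your reduction to the generators $L_0I\cup L_1I$, $L_0J\cup L_1J$ together with the identifications $L_aX\boxprod L_bY\cong L_c(X\otimes Y)$. Your closing observation --- that the reduction to generators is the step that breaks without cofibrant generation --- is exactly the point motivating the paper's Theorem \ref{arrowcat-ppax}.
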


\begin{remark}
In (1) above the projective model structure on $\arrowm$ is the special case of \cite{hovey} (5.1.3) for the direct category $\{0 \to 1\}$. If $\alpha$ is a (trivial) cofibration, then $\alpha_1$ is also a (trivial) cofibration.  If $\M$ is cofibrantly generated with generating cofibrations $I$ and generating trivial cofibrations $J$, then $\arrowmproj$ is cofibrantly generated with generating cofibrations $L_0I \cup L_1I$ and generating trivial cofibrations $L_0J \cup L_1J$ \cite{hovey} (5.1.8).
\end{remark}

\section{Arrow Category as a Monoidal Model Category}

In \cite{hovey-smith} (immediately after 3.1) Hovey stated that the last statement in Theorem \ref{hovey-projective} should be true even without assuming $\M$ is cofibrantly generated.  In this section, we prove that this is indeed the case. 

\begin{theorem}\label{arrowcat-ppax}
Suppose $\M$ is a monoidal model category. Then $\arrowmpp$ equipped with the projective model structure is a monoidal model category.
\end{theorem}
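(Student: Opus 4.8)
The plan is to verify the pushout product axiom for the arrow category $\arrowm$, equipped with the pushout product $\boxprod$ and the projective model structure, directly from the pushout product axiom in $\M$, using the description of (trivial) cofibrations in the projective model structure recalled in Theorem~\ref{hovey-projective}(1). The point --- and the reason no cofibrant generation is needed --- is that one never passes to generating (trivial) cofibrations; instead one rewrites the pushout product $\alpha\boxprod\beta$ in $\arrowm$ entirely in terms of pushout products in $\M$ of maps that are already known to be (trivial) cofibrations.

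Fix (trivial) cofibrations $\alpha\colon f\to g$ and $\beta\colon h\to k$ in the projective model structure on $\arrowm$, with $f\colon X_0\to X_1$, $g\colon Y_0\to Y_1$, $h\colon Z_0\to Z_1$, and $k\colon W_0\to W_1$. First I would write out the morphism $\alpha\boxprod\beta$ of $\arrowm$ --- the description~(\ref{alpha-box-beta}) --- as an explicit commutative square in $\M$, whose four corners are iterated pushouts over the grid indexed by $\{X_0,X_1,Y_0,Y_1\}\times\{Z_0,Z_1,W_0,W_1\}$. Since $\Ev_0$ and $\Ev_1$ preserve colimits, and $\Ev_1$ is moreover strong monoidal, one has $\Ev_1(\alpha\boxprod\beta)=\alpha_1\boxprod\beta_1$ in $\M$, while $\Ev_0(\alpha\boxprod\beta)$ is the map from the pushout $\Ev_0(f\boxprod k)\coprodover{\Ev_0(f\boxprod h)}\Ev_0(g\boxprod h)$ to $\Ev_0(g\boxprod k)$ induced by $\Ev_0(\alpha\boxprod k)$ and $\Ev_0(g\boxprod\beta)$. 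By Theorem~\ref{hovey-projective}(1), $\alpha\boxprod\beta$ is a (trivial) cofibration in the projective model structure if and only if $\Ev_0(\alpha\boxprod\beta)$ and the associated pushout corner map
\[
\Ev_1\bigl(\dom(\alpha\boxprod\beta)\bigr)\coprodover{\Ev_0(\dom(\alpha\boxprod\beta))}\Ev_0(g\boxprod k)\longrightarrow Y_1\otimes W_1
\]
are (trivial) cofibrations in $\M$, so everything reduces to understanding these two pushout corner maps.

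This is where the three rewritings promised in the introduction come in. In~\eqref{teresa} I would use the pasting law for pushouts to rewrite a pushout corner map as the induced map between two pushout squares; this is the bookkeeping tool that lets one reorganize the nested colimits making up $\Ev_0(\alpha\boxprod\beta)$ and the corner map above. Then in~\eqref{xi-rewrite} and~\eqref{alphabetaf-pc} these two corner maps are rewritten in terms of genuine pushout products in $\M$: one identifies the displayed corner map with $(\alpha_1\pcorner g)\boxprod(\beta_1\pcorner k)$, and $\Ev_0(\alpha\boxprod\beta)$ with a pushout corner map assembled from pushout products in $\M$ of the maps $\alpha_0=\Ev_0\alpha$, $\alpha_1=\Ev_1\alpha$, $\alpha_1\pcorner g$ on the one side and $\beta_0$, $\beta_1$, $\beta_1\pcorner k$ on the other. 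Since $\alpha$ and $\beta$ are (trivial) cofibrations in the projective model structure, Theorem~\ref{hovey-projective}(1) together with the remark after it shows that $\alpha_0$, $\alpha_1$, $\alpha_1\pcorner g$ and $\beta_0$, $\beta_1$, $\beta_1\pcorner k$ are all (trivial) cofibrations in $\M$ --- the $\alpha$-maps being trivial cofibrations as soon as $\alpha$ is a weak equivalence, and likewise for the $\beta$-maps. The pushout product axiom in $\M$ (Definition~\ref{ppax}(2)) then shows both rewritten maps are cofibrations in $\M$, and trivial cofibrations whenever $\alpha$ or $\beta$ is a weak equivalence, since in that case each relevant pushout product has a trivial-cofibration factor. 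Hence $\alpha\boxprod\beta$ is a (trivial) cofibration, which is precisely the pushout product axiom for $\arrowm$.

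The main obstacle, I expect, is carrying out these three rewritings: the corners of $\alpha\boxprod\beta$ are colimits over the full grid built from $\alpha$ and $\beta$, and one must repeatedly use associativity and commutativity of pushouts, together with the pasting law, to recognize the relevant nested colimits as honest pushout products in $\M$. This is exactly the difficulty highlighted in the introduction --- computing the pushout product in $\arrowmpp$ in terms of pushout products in $\M$ --- and while it is elementary, it is delicate; by contrast the reduction via Theorem~\ref{hovey-projective}(1) and the concluding appeal to the pushout product axiom in $\M$ are formal.
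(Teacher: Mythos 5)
Your proposal follows essentially the same route as the paper: reduce via the characterization of projective (trivial) cofibrations to showing that $\Ev_0(\alpha\boxprod_2\beta)$ and the pushout corner map of the square $\alpha\boxprod_2\beta$ are (trivial) cofibrations in $\M$, identify the latter with the pushout product $(\alpha_1\pcorner f_W)\boxprod(\beta_1\pcorner f_Y)$ of the two corner maps (the paper's Lemma~\ref{alphabeta-pc-cof}), and handle the former by reorganizing the nested colimits so that it factors through cobase changes of the pushout products $\alpha_1\boxprod\beta_0$ and $\alpha_0\boxprod(\beta_1\pcorner f_Y)$ (the paper's Lemma~\ref{zeta-cof}). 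The only part you leave schematic is the explicit cube decomposition of $\Ev_0(\alpha\boxprod_2\beta)$ as a composite of two such cobase changes, but you correctly flag this as the delicate step and name the right ingredient maps, so the argument goes through as in the paper.
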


\begin{proof}
We already know that $\arrowmpp$ is a symmetric monoidal closed category equipped with the projective model structure.  We must show that it satisfies the pushout product axiom.  Suppose $\alpha : f_V \to f_W$ and $\beta : f_X \to f_Y$,
\begin{equation}\label{alpha-and-beta}
\nicexy@R-.3cm{V_0 \ar[d]_-{f_V} \ar[r]^-{\alpha_0} & W_0 \ar[d]^-{f_W} & X_0 \ar[d]_-{f_X} \ar[r]^-{\beta_0} & Y_0 \ar[d]^-{f_Y}\\
V_1 \ar[r]^-{\alpha_1} & W_1 & X_1 \ar[r]^-{\beta_1} & Y_1}
\end{equation}
are two maps in $\arrowmpp$.  Their pushout product in $\arrowmpp$ is the map
\[\nicexy{(f_W \boxprod f_X) \coprodover{f_V \boxprod f_X} (f_V \boxprod f_Y) \ar[r]^-{\alpha \boxprod_2 \beta} & f_W \boxprod f_Y}\]
in which $\boxprod$ (resp., $\boxprod_2$) is the pushout product in $\M$ (resp., $\arrowmpp$).  To simplify the notation, in the diagrams below we will omit writing $\otimes$, so $V_0 X_0$ means $V_0 \otimes X_0$, etc. Unraveling the various pushout products, $\alpha \boxprod_2 \beta$ is the commutative square
\begin{equation}\label{alpha-box-beta}
\begin{small}
\nicexy{\Bigl(W_1 X_0 \coprodover{W_0 X_0} W_0 X_1\Bigr) \coprodover{\bigl(V_1X_0 \coprodover{V_0X_0} V_0X_1\bigr)} \Bigl(V_1Y_0 \coprodover{V_0Y_0} V_0Y_1\Bigr) \ar[d]_-{(f_W \boxprod f_X) \coprodover{f_V \boxprod f_X} (f_V \boxprod f_Y)} \ar[r]^-{\zeta} & W_1Y_0 \coprodover{W_0Y_0} W_0Y_1 \ar[d]^-{f_W \boxprod f_Y}\\
W_1X_1 \coprodover{V_1X_1} V_1Y_1 \ar[r]^-{\alpha_1 \boxprod \beta_1} & W_1Y_1}
\end{small}
\end{equation}
in $\M$.  

To prove the pushout product axiom in $\arrowmpp$ equipped with the projective model structure, suppose $\alpha$ is a cofibration and $\beta$ is a (trivial) cofibration in $\arrowmproj$.  The case with $\alpha$ a trivial cofibration and $\beta$ a cofibration is proved by essentially the same argument.  To show that $\alpha \boxprod_2 \beta$ is a (trivial) cofibration in $\arrowmproj$, we must show that:
\begin{enumerate}
\item The top horizontal map $\zeta$ in \eqref{alpha-box-beta} is a (trivial) cofibration in $\M$.
\item The pushout corner map 
\begin{equation}\label{alphabeta-pc}
\nicexy@C+1.5cm{\Bigl(W_1X_1 \coprodover{V_1X_1} V_1Y_1\Bigr) \coprodover{Z} \Bigl(W_1Y_0 \coprodover{W_0Y_0} W_0Y_1\Bigr) \ar[r]^-{(\alpha_1 \boxprod \beta_1) \pcorner (f_W \boxprod f_Y)} & W_1Y_1}
\end{equation}
of the commutative diagram \eqref{alpha-box-beta} is a (trivial) cofibration in $\M$, where $Z$ denotes the object in the upper left corner in \eqref{alpha-box-beta}.
\end{enumerate}
We will prove statement (1) in Lemma \ref{zeta-cof} and statement (2) in Lemma \ref{alphabeta-pc-cof} below.
\end{proof}

\begin{lemma}\label{zeta-cof}
The top horizontal map $\zeta$ in \eqref{alpha-box-beta} is a (trivial) cofibration in $\M$.
\end{lemma}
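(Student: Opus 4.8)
The plan is to recognise $\zeta$ as $\Ev_0$ of the pushout product $\alpha \boxprod_2 \beta$ and then to compute the latter by decomposing the cofibration $\alpha$. Since colimits in $\arrowm$ — in particular the pushout defining $\alpha \boxprod_2 \beta$ in \eqref{alpha-box-beta} — are formed entrywise, the functor $\Ev_0$ preserves them, so the object $Z$ is $\Ev_0$ of the domain of $\alpha\boxprod_2\beta$ and its target $W_1 Y_0 \coprodover{W_0 Y_0} W_0 Y_1$ is $\Ev_0(f_W \boxprod f_Y)$; hence $\zeta = \Ev_0(\alpha \boxprod_2 \beta)$. It therefore suffices to rewrite $\alpha\boxprod_2\beta$ so that applying $\Ev_0$ produces a composite of pushouts of pushout products of maps in $\M$, to which the pushout product axiom of $\M$ applies.

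First I would use the standard latching factorization $\alpha = \alpha' \circ \alpha''$ of a map in $\arrowm$ (for the direct category $\{0 \to 1\}$): with $f_{V'}$ the object $\bigl(W_0 \to V_1 \coprodover{V_0} W_0\bigr)$ of $\arrowm$, the map $\alpha'' \colon f_V \to f_{V'}$ is a pushout of $L_0(\alpha_0)$, and $\alpha' \colon f_{V'} \to f_W$ — which is $\id_{W_0}$ on $\Ev_0$ and the pushout corner map $\hat\alpha := \alpha_1 \pcorner f_W \colon V_1 \coprodover{V_0} W_0 \to W_1$ on $\Ev_1$ — is a pushout of $L_1(\hat\alpha)$. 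This factorization is purely formal; the hypothesis that $\alpha$ be a cofibration in $\arrowmproj$ enters only through Theorem \ref{hovey-projective}, which says precisely that $\alpha_0$ and $\hat\alpha$ are then (trivial) cofibrations in $\M$. Because the pushout product $\boxprod_2$ in the symmetric monoidal closed category $\arrowmpp$ is compatible with composition and with pushouts in each variable, $\alpha\boxprod_2\beta$ is the composite of a pushout of $\alpha''\boxprod_2\beta$ followed by $\alpha'\boxprod_2\beta$, where $\alpha''\boxprod_2\beta$ (resp.\ $\alpha'\boxprod_2\beta$) is a pushout of $L_0(\alpha_0)\boxprod_2\beta$ (resp.\ $L_1(\hat\alpha)\boxprod_2\beta$).

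Next come two elementary computations. From $L_0 A = (\Id\colon A \to A)$ and $L_1 A = (\varnothing \to A)$, together with $\varnothing \otimes (-) = \varnothing$ and the fact that $\otimes$ preserves pushouts, one obtains natural isomorphisms $L_0 A \boxprod g \cong L_0(A \otimes \Ev_1 g)$ and $L_1 A \boxprod g \cong A \otimes g$ of objects of $\arrowm$. Substituting these into the two pushout products above, and using that $L_0$ preserves pushouts while $\Ev_0 L_0 = \id$, a short diagram chase identifies $\Ev_0\bigl(L_0(\alpha_0)\boxprod_2\beta\bigr)$ with the pushout product $\alpha_0 \boxprod \beta_1$ in $\M$, and $\Ev_0\bigl(L_1(\hat\alpha)\boxprod_2\beta\bigr)$ with $\hat\alpha \boxprod \beta_0$. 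Applying $\Ev_0$ — which preserves composites and pushouts — to the decomposition of $\alpha\boxprod_2\beta$ then exhibits $\zeta$ as a pushout of $\alpha_0 \boxprod \beta_1$ followed by a pushout of $\hat\alpha \boxprod \beta_0$. Now $\alpha_0$ and $\hat\alpha$ are cofibrations in $\M$, while $\beta_0$ and $\beta_1$ are (trivial) cofibrations in $\M$ (for $\beta_1$, use $\beta_1 = \hat\beta \circ (\text{a pushout of } \beta_0)$ with $\hat\beta := \beta_1 \pcorner f_Y$, or the Remark following Theorem \ref{hovey-projective}); so by the pushout product axiom of $\M$ the maps $\alpha_0 \boxprod \beta_1$ and $\hat\alpha \boxprod \beta_0$ are (trivial) cofibrations, and hence so is $\zeta$. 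The companion case in which $\alpha$ is a trivial cofibration and $\beta$ a cofibration is identical, since the pushout product axiom of $\M$ still returns trivial cofibrations.

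I expect the main obstacle to be bookkeeping: verifying the isomorphisms $L_0 A \boxprod g \cong L_0(A \otimes \Ev_1 g)$ and $L_1 A \boxprod g \cong A \otimes g$, confirming the interaction of $\boxprod_2$ with composition and with pushouts, and then carrying the resulting composite-of-pushouts structure through the colimit-preserving functor $\Ev_0$ — all diagram chases with iterated pushouts. (Alternatively, $\zeta$ is the pushout corner map of the image, under the bifunctor $P_\boxprod := \Ev_0 \circ \boxprod$, of the square determined by $\alpha$ and $\beta$; interchanging colimits rewrites $\zeta$ as the map induced on the pushouts of two spans over the shape $1 \leftarrow 0 \rightarrow 2$ by the map of spans with components $\alpha_1 \boxprod \beta_0$, $\alpha_0 \boxprod \beta_0$, $\alpha_0 \boxprod \beta_1$, whose three latching maps then become $\alpha_0 \boxprod \beta_0$, $\hat\alpha \boxprod \beta_0$, $\alpha_0 \boxprod \hat\beta$; since $\colim$ out of the projective model structure on that span is left Quillen, this also concludes.) In either route, the only homotopical input is the pushout product axiom of $\M$ and the stability of (trivial) cofibrations under pushout and composition.
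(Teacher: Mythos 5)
Your proof is correct, but it reaches the conclusion by a genuinely different route than the paper. The paper works entirely inside $\M$: it interchanges colimits in the explicit cube \eqref{teresa} to factor $\zeta$ as a pushout of $\alpha_1 \boxprod \beta_0$ followed by a pushout of $\xi = \alpha_0 \boxprod (\beta_1 \pcorner f_Y)$. You instead observe that $\zeta = \Ev_0(\alpha \boxprod_2 \beta)$ and factor the projective cofibration $\alpha$ through its latching object, as a pushout of $L_0(\alpha_0)$ followed by a pushout of $L_1(\alpha_1 \pcorner f_W)$; pushing this through $\boxprod_2$ and the colimit-preserving functor $\Ev_0$ exhibits $\zeta$ as a pushout of $\alpha_0 \boxprod \beta_1$ followed by a pushout of $(\alpha_1 \pcorner f_W) \boxprod \beta_0$ --- the mirror image of the paper's two factors, which correspond to the analogous latching factorization of $\beta$. (Your parenthetical alternative is essentially the paper's actual cube argument combined with its Reedy-category remark, in projective form.) Both routes terminate in the pushout product axiom of $\M$ applied to maps supplied by the characterization of projective (trivial) cofibrations in Theorem \ref{hovey-projective}. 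What your main route buys: it makes transparent exactly why cofibrant generation is dispensable --- every projective cofibration, cellular or not, admits the two-cell factorization through its latching object --- it recovers Hovey's reduction to the generators $L_0I \cup L_1I$ as a special case, and it scales to longer direct categories. What it costs: the compatibility of $\boxprod_2$ with composition and with cobase change in each variable is doing real work and should be proved or cited (it is the standard argument using that $-\otimes X$ preserves colimits in a closed monoidal category, valid in $\arrowmpp$ since that category is closed), whereas the paper's cube keeps every colimit interchange explicit and self-contained. I checked the two isomorphisms $L_0A \boxprod g \cong L_0(A \otimes \Ev_1 g)$ and $L_1A \boxprod g \cong A \otimes g$, and the resulting identifications $\Ev_0\bigl(L_0(\alpha_0)\boxprod_2\beta\bigr) \cong \alpha_0 \boxprod \beta_1$ and $\Ev_0\bigl(L_1(\alpha_1 \pcorner f_W)\boxprod_2\beta\bigr) \cong (\alpha_1 \pcorner f_W) \boxprod \beta_0$; they are correct, as is your appeal to the Remark after Theorem \ref{hovey-projective} for $\beta_1$ being a (trivial) cofibration.
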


\begin{proof}
First note that $\zeta$ is a pushout corner map.  By the commutation of colimits, we may rewrite $\zeta$ as the unique induced map in the commutative cube
\begin{equation}\label{teresa}
\begin{scriptsize}
\nicexy@R-.4cm@C-1cm{
W_0X_0 \coprodover{V_0X_0} V_0Y_0 \ar[ddd] \ar@{}[ddr]^(0.2){}="a"^(.9){}="b" \ar "a";"b" |-{\alpha_0 \boxprod \beta_0} \ar@{}[drr]|-{\mathrm{pushout}} \ar[rrr] &&& W_0X_1 \coprodover{V_0X_1} V_0Y_1 \ar@{.>}[ddd] \ar[ddr]|-{\alpha_0 \boxprod \beta_1} \ar[dl] &\\
 && P_{\mathrm{Top}}\ar@{..>}[ddd] \ar[drr]^(0.7){\xi} && \\
& W_0Y_0 \ar[ddd] \ar[rrr] \ar[ur] &&& W_0Y_1 \ar[ddd]\\
W_1X_0 \coprodover{V_1X_0} V_1Y_0 \ar@{}[ddr]^(0.2){}="a"^(.9){}="b" \ar "a";"b"|-{\alpha_1 \boxprod \beta_0} \ar'[r][rrr]  &\ar@{}[dr]|-{\mathrm{pushout}} && 
\Bigl(W_1X_0 \coprodover{V_1X_0} V_1Y_0\Bigr) \coprodover{\bigl(W_0X_0 \coprodover{V_0X_0} V_0Y_0\bigr)} \Bigl(W_0X_1 \coprodover{V_0X_1} V_0Y_1\Bigr) \ar@{}[ddr]^(0.24){}="a"^(.9){}="b" \ar "a";"b" |-{\zeta} \ar[dl]^(.7){\delta_0} &\\
 && P_{\mathrm{Bot}} \ar@{..>}[drr]^{\delta_1}&& \\
& W_1Y_0 \ar[rrr] \ar[ur] &&& W_1Y_0 \coprodover{W_0Y_0} W_0Y_1}
\end{scriptsize}
\end{equation}
in $\M$ with both the back and the front faces pushouts, and with $P_{Top}$ (resp. $P_{Bot}$) as the pushout of the displayed spans in the top (resp. bottom) faces, as shown above. Furthermore, the diagonal face, featuring $P_{Top}, P_{Bot}, \xi,$ and $\delta_1$ is a pushout square. By colimit commutation, the relevant map, $\zeta$, is the composition $\delta_1 \circ \delta_0$. Since $\delta_0$ is a pushout of $\alpha_1 \boxprod \beta_0$, it is a (trivial) cofibration in $\M$. Since $\delta_1$ is a pushout of $\xi$, it suffices to prove that $\xi$ is a (trivial) cofibration in $\M$. We can rewrite $\xi$ as the pushout product $\alpha_0 \boxprod (\beta_1 \pcorner f_Y)$ in the diagram
\begin{equation}\label{xi-rewrite}
\nicexy@R-.5cm@C-.4cm{V_0\Bigl(X_1 \coprodover{X_0} Y_0\Bigr) \ar@{}[dr]|-{\mathrm{pushout}} \ar[d]_-{(\alpha_0,\Id)}  \ar[r]^-{(\Id, \beta_1 \pcorner f_Y)} & V_0Y_1 \ar[d] \ar@/^2pc/[ddr]^-{(\alpha_0, \Id)} &\\
W_0\Bigl(X_1 \coprodover{X_0} Y_0\Bigr) \ar[r] \ar@/_1pc/[drr]_-{(\Id, \beta_1 \pcorner f_Y)} & \Bigl[W_0\bigl(X_1 \coprodover{X_0} Y_0\bigr)\Bigr] \coprodover{\bigl[V_0(X_1 \coprodover{X_0} Y_0)\bigr]} (V_0Y_1) \ar@{}[dr]^(0.37){}="a"^(.9){}="b" \ar "a";"b" |-{\xi}  &\\ && W_0Y_1}
\end{equation}
in $\M$.  Indeed, the pushout in the previous diagram has the same universal property as the pushout of the top face of the cube \eqref{teresa}, and the pushout corner map to $W_0Y_1$ corresponds to the previous pushout product.  Since the map $\alpha_0$ is a cofibration and since the pushout corner map $\beta_1 \pcorner f_Y$ is a (trivial) cofibration in $\M$, their pushout product $\xi$ is a (trivial) cofibration in $\M$ by the pushout product axiom.
\end{proof}

\begin{remark}
An alternative way to prove Lemma \ref{zeta-cof} is to consider the Reedy category $\sD = \spancat$ with three objects, a map $0 \to -1$ that lowers the degree, a map $0 \to 1$ that raises the degree, and no other non-identity maps.  There is a Reedy model structure \cite{hovey} (5.2.5) on the diagram category $\M^{\sD}$ in which weak equivalences are defined entrywise.  A map $h : A \to B$ in $\M^{\sD}$,
\begin{equation}\label{hAB}
\nicexy@R-.3cm{A_{-1} \ar[d]_-{h_{-1}} & A_0 \ar[l]\ar[d]^-{h_0} \ar[r] & A_1 \ar[d]^-{h_1}\\
B_{-1} & B_0 \ar[l] \ar[r]|-{g} & B_1}
\end{equation}
is a Reedy (trivial) cofibration if and only if the maps $h_{-1}$, $h_0$, and the pushout corner map $g \pcorner h_1 : B_0 \coprod_{A_0} A_1 \to B_1$ are (trivial) cofibrations in $\M$.  Furthermore, there is a Quillen adjunction
\begin{equation}\label{reedy-adjunction}
\nicexy@C+.5cm{\M^{\sD} \ar@<2pt>[r]^-{\colim} & \M \ar@<2pt>[l]^-{\mathrm{constant}}}
\end{equation}
in which the left Quillen functor $\colim$ is the pushout \cite{hovey} (proof of 5.2.6).

To show that the induced map $\zeta$ in \eqref{teresa} is a (trivial) cofibration in $\M$, one can use the Quillen adjunction \eqref{reedy-adjunction}.  It is enough to show that the diagram consisting of the left and the top faces of the cube \eqref{teresa}--which has the form \eqref{hAB}--is a Reedy (trivial) cofibration in $\M^{\sD}$.  Since the maps $\alpha_0$ and $\alpha_1$ are cofibrations and since $\beta_0$ is a (trivial) cofibration in $\M$, the pushout products $\alpha_1 \boxprod \beta_0$ and $\alpha_0 \boxprod \beta_0$ are (trivial) cofibrations in $\M$ by the pushout product axiom. We thank the referee for suggesting the simplified proof of Lemma \ref{zeta-cof} given above, to avoid the need for Reedy categories.
\end{remark}

\begin{lemma}\label{alphabeta-pc-cof}
The pushout corner map $(\alpha_1 \boxprod \beta_1) \pcorner (f_W \boxprod f_Y)$ in \eqref{alphabeta-pc} is a (trivial) cofibration in $\M$.
\end{lemma}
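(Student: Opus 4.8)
The plan is to rewrite the pushout corner map $(\alpha_1\boxprod\beta_1)\pcorner(f_W\boxprod f_Y)$ of \eqref{alphabeta-pc} as a single pushout product of two maps in $\M$, and then apply the pushout product axiom for $\M$. The two maps are exactly the ones witnessing that $\alpha$ and $\beta$ are (trivial) cofibrations in $\arrowmproj$: by Theorem \ref{hovey-projective}(1), the pushout corner map $\alpha_1\pcorner f_W\mathcolon V_1\coprodover{V_0}W_0 \longrightarrow W_1$ is a cofibration in $\M$ (because $\alpha$ is a cofibration), and $\beta_1\pcorner f_Y\mathcolon X_1\coprodover{X_0}Y_0 \longrightarrow Y_1$ is a (trivial) cofibration in $\M$ (because $\beta$ is). The claim I would establish is the identification
\[
(\alpha_1\boxprod\beta_1)\pcorner(f_W\boxprod f_Y)\;=\;(\alpha_1\pcorner f_W)\boxprod(\beta_1\pcorner f_Y).
\]

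To prove this identification, observe first that both maps have target $W_1Y_1$, so only the sources must be matched, compatibly with the maps to $W_1Y_1$. Expanding the outer tensor factors $V_1\coprodover{V_0}W_0$ and $X_1\coprodover{X_0}Y_0$ over their defining pushouts, and using that $\otimes$ preserves colimits in each variable, the source of $(\alpha_1\pcorner f_W)\boxprod(\beta_1\pcorner f_Y)$ is an iterated colimit of the fifteen tensor products $A\otimes B$ other than $W_1\otimes Y_1$, with $A\in\{V_0,V_1,W_0,W_1\}$ and $B\in\{X_0,X_1,Y_0,Y_1\}$, arranged over the $4$-cube $\{0\to1\}^{\times 4}$ with its terminal object removed. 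By the same commutation of colimits used in Lemma \ref{zeta-cof}, the source appearing in \eqref{alphabeta-pc} — the pushout of $W_1X_1\coprodover{V_1X_1}V_1Y_1$ and $W_1Y_0\coprodover{W_0Y_0}W_0Y_1$ over $Z$ — is a colimit of exactly the same punctured-cube diagram, and under this identification the pushout corner map to $W_1Y_1$ goes over to the pushout product map. Concretely, as in \eqref{xi-rewrite}, the pushout that occurs inside the source of $(\alpha_1\pcorner f_W)\boxprod(\beta_1\pcorner f_Y)$ has the same universal property as $\bigl(W_1X_1\coprodover{V_1X_1}V_1Y_1\bigr)\coprodover{Z}\bigl(W_1Y_0\coprodover{W_0Y_0}W_0Y_1\bigr)$, and the two corner maps agree under that isomorphism.

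Once the identification is in place, the pushout product axiom for $\M$ completes the proof: $\alpha_1\pcorner f_W$ is a cofibration and $\beta_1\pcorner f_Y$ is a (trivial) cofibration, so their pushout product is a (trivial) cofibration in $\M$. The same argument handles the remaining case in Theorem \ref{arrowcat-ppax}, with $\alpha$ a trivial cofibration and $\beta$ a cofibration, since then $\alpha_1\pcorner f_W$ is a trivial cofibration and $\beta_1\pcorner f_Y$ a cofibration. The one genuinely laborious step is the colimit bookkeeping in the previous paragraph: confirming that the large commutative square \eqref{alpha-box-beta} has the pushout product of the two corner maps as its own pushout corner map, which comes down to matching the two iterated-colimit presentations of the source and checking that the structure maps correspond. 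No homotopical input beyond the pushout product axiom for $\M$ enters.
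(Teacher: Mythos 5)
Your proposal is correct and follows essentially the same route as the paper: the paper's proof also rewrites $(\alpha_1 \boxprod \beta_1) \pcorner (f_W \boxprod f_Y)$ as the pushout product $(\alpha_1 \pcorner f_W) \boxprod (\beta_1 \pcorner f_Y)$ via an isomorphism of sources having the same universal property (its diagram \eqref{alphabetaf-pc}), and then invokes the pushout product axiom in $\M$. Your punctured-cube colimit bookkeeping is just a more explicit spelling-out of that same identification.
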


\begin{proof}
There is a commutative square
\begin{equation}\label{alphabetaf-pc}
\nicexy@R-.6cm@C+1.5cm{\Bigl(W_1X_1 \coprodover{V_1X_1} V_1Y_1\Bigr) \coprodover{Z} \Bigl(W_1Y_0 \coprodover{W_0Y_0} W_0Y_1\Bigr) \ar[r]^-{(\alpha_1 \boxprod \beta_1) \pcorner (f_W \boxprod f_Y)} \ar[d]_-{\cong} & W_1Y_1 \ar[d]^-{=}\\
W_1\Bigl(X_1\coprodover{X_0} Y_0\Bigr) \coprodover{\bigl(V_1 \coprodover{V_0} W_0\bigr)\bigl(X_1 \coprodover{X_0} Y_0\bigr)} \Bigl(V_1 \coprodover{V_0} W_0\Bigr)Y_1 \ar[r]^-{(\alpha_1 \pcorner f_W) \boxprod (\beta_1 \pcorner f_Y)} & W_1Y_1}
\end{equation}
with $\alpha_1 \pcorner f_W$ (resp., $\beta_1 \pcorner f_Y$) the pushout corner map of $\alpha$ (resp., $\beta$) in \eqref{alpha-and-beta} and the bottom horizontal map the pushout product of $\alpha_1 \pcorner f_W$ and $\beta_1 \pcorner f_Y$.  The vertical isomorphism on the left comes from the fact that the pushout in the lower left corner has the same universal property as the pushout in the upper left corner.  Since $\alpha_1 \pcorner f_W$ is a cofibration and since $\beta_1 \pcorner f_Y$ is a (trivial) cofibration in $\M$, their pushout product--the bottom horizontal map in \eqref{alphabetaf-pc}--is a (trivial) cofibration in $\M$ by the pushout product axiom.
\end{proof}

\section{Examples : Non-Cofibrantly Generated Monoidal Model Categories}
\label{sec:not-cg}

In this section we consider examples of monoidal model categories that are \emph{not} cofibrantly generated.  By Theorem \ref{arrowcat-ppax} each such category yields a monoidal model structure in its arrow category with the pushout product monoidal structure and the projective model structure. We note that these examples are not pathological; all arose naturally in homotopy theoretic investigations, and none are known to have Quillen equivalent cofibrantly generated model structures. Indeed, for several of these examples, there cannot be {\em any} cofibrantly generated model structure encoding its homotopy theory, as we prove in Section \ref{subsec:no-model}.

\subsection{Christensen-Hovey Model Structure on Integral Chain Complexes} \label{subsec:absolute}
The category $\Chz$ of chain complexes of abelian groups admits the \emph{absolute model structure} \cite{christensen-hovey}.  The weak equivalences in $\Chz$ are the chain homotopy equivalences.  Cofibrations (resp., fibrations) are the degreewise split monomorphisms (resp., degreewise split epimorphisms).  Equipped with the absolute model structure, $\Chz$ is a \emph{non}-cofibrantly generated monoidal model category \cite{christensen-hovey} (Example 3.4 and Cor. 5.12).

\subsection{Barthel-May-Riehl Model Structure on DG-Modules} \label{subsec:dga}
For a commutative ring $R$, the category of differential graded $R$-modules admits the \emph{$r$-model structure} $dgRmod_r$ \cite{six-model} (1.14 and 1.15).  Analogous to the previous example, its cofibrations (resp., fibrations) are the degreewise split monomorphisms (resp.,  degreewise split epimorphisms).  Its weak equivalences are the chain homotopy equivalences.  This is a \emph{non}-cofibrantly generated monoidal model category.

\subsection{Ad\'{a}mek-Herrlich-Rosick\'{y}-Tholen Model Structure on Small Categories} \label{subsec:AHRT-cat}
The category $\Cat$ of all small categories has a \emph{non}-cofibrantly generated model structure \cite{ahrt} (2.3, 3.5, and 3.7) in which every map is a weak equivalence and cofibrations are the full functors.  Trivial fibrations are the topological functors.  We will call this the \emph{AHRT model structure} on $\Cat$.  With Cartesian product as the monoidal product, the AHRT model structure on $\Cat$ is a monoidal model category because the pushout product of two full functors is again a full functor. 
Similarly, the AHRT model structure on the category of small posets is not cofibrantly generated \cite{ahrt} (3.4).

\subsection{Str{\o}m Model Structure on Compactly Generated Spaces}
The category $\Top$ of compactly generated spaces has a Str{\o}m model structure \cite{strom} with homotopy equivalences as weak equivalences, closed Hurewicz cofibrations as cofibrations, and Hurewicz fibrations as fibrations.  This is a monoidal model category \cite{may99} (Chapter 6.4) and is \emph{not} cofibrantly generated \cite{raptis} (Remark 4.7). 

\subsection{Pro-categories} \label{subsec:pro}

For any category $\cat C$, the pro-category pro-$\cat C$ has objects cofiltered diagrams $X = \{x_a\}$ of objects of $\cat C$, and morphisms 

\[
\text{Hom}_{\text{pro}-{\cat C}}(X, Y) = \lim_{\beta} \colim_{\alpha} \cat C(x_\alpha, y_\beta)
\]

Isaksen \cite{isaksen} built the strict model structure on pro$-\cat C$ whenever $\cat C$ is a proper model category. However, pro-$\cat C$ is almost never cofibrantly generated (rather, it is often fibrantly generated). In particular, it is not cofibrantly generated when $\cat C$ is $sSet$ \cite{isaksen} (Section 5).

If $\cat C$ is a tensor model category, i.e. a monoidal model category such that functors $C\otimes -$ and $-\otimes C$ preserve weak equivalences for all cofibrant $C$, then pro-$\cat C$ is also a tensor model category, with the levelwise tensor product \cite{fausk-isaksen} (Proposition 12.7), although pro-$\cat C$ is almost never a closed category. Hence, for most proper, tensor model categories $\cat C$, the category pro$-\cat C$ is a monoidal model category that is not cofibrantly generated.

\subsection{On presentable $\infty$-categories} \label{subsec:no-model}

Recall that a model category $\M$ is called {\em combinatorial} if $\M$ is locally presentable as a category and cofibrantly generated as a model category. Proposition A.3.7.6 of \cite{htt} demonstrates that the $\infty$-category associated to a combinatorial model category is presentable. Furthermore, in a combinatorial model category, the classes of weak equivalences and trivial fibrations are closed under sufficiently large filtered colimits, by \cite{dugger} (Proposition 2.3). The category of small categories, of chain complexes over a ring, and of differential graded $R$-modules are all locally presentable. So the only obstacle to the model categories of Sections \ref{subsec:absolute}, \ref{subsec:dga}, and \ref{subsec:AHRT-cat}, being combinatorial is cofibrant generation. We now argue that there can be no combinatorial model structure for these three homotopy theories, as well as the homotopy theory of Section \ref{subsec:pro}.

\begin{proposition}
The homotopy theory encoded by the absolute model structure of \cite{christensen-hovey} is a non-presentable $\infty$-category and cannot admit any combinatorial model.
\end{proposition}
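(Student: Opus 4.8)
The plan is to reduce everything to showing that the underlying $\infty$-category $\C$ of the absolute model structure is not presentable: once this is known, there is no combinatorial model, since a combinatorial model category has presentable underlying $\infty$-category by \cite{htt} (A.3.7.6) and a Quillen equivalence induces an equivalence of underlying $\infty$-categories. The first step is to identify $\C$ concretely. In the absolute model structure the cofibrations are the degreewise split monomorphisms and the fibrations the degreewise split epimorphisms, so \emph{every} object of $\Chz$ is both fibrant and cofibrant; hence the homotopy category is $\Chz$ with chain homotopic maps identified, i.e., the classical chain homotopy category $K(\Z)$ of complexes of abelian groups. Since $0$ is a zero object and the shift is an equivalence on $K(\Z)$, this model structure is stable, so $\C$ is a stable $\infty$-category with $\Ho(\C) = K(\Z)$.

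My primary route would then be Brown representability. If $\C$ were presentable it would be a presentable stable $\infty$-category, and the representability theorem for presentable $\infty$-categories (\cite{htt}, \S5.5) would force its homotopy category $K(\Z)$ to be a well generated triangulated category, in particular to admit a generating set. But $K(\Z)$ has no generating set: for a bounded complex $G$ of finitely generated projectives one has $\Hom_{K(\Z)}(G,X) = \Hom_{D(\Z)}(G,X)$, so such $G$ cannot detect a non-zero acyclic object of $K(\Z)$ -- and acyclic $\neq$ contractible, e.g.\ a non-split short exact sequence of abelian groups viewed as a three-term complex -- while more generally any \emph{set} of test complexes has bounded cardinality and therefore fails to detect sufficiently large non-contractible (e.g.\ pure acyclic) complexes. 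This is the classical failure of Brown representability / well generation for $K(\Z)$, and it contradicts presentability of $\C$.

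An alternative route, closer to the criterion recalled just before the proposition, would instead invoke \cite{dugger} (Prop.\ 2.3): in any combinatorial model category the weak equivalences are closed under $\kappa$-filtered colimits for all sufficiently large regular $\kappa$, and via the theory of accessible localizations (\cite{htt}, \S5.5) presentability of $\C$ would force the class $W$ of chain homotopy equivalences to be an accessible, accessibly embedded full subcategory of $\Arr(\Chz)$, hence closed under $\kappa$-filtered colimits for some regular $\kappa$; one then disproves this by exhibiting, for \emph{every} regular $\kappa$, a $\kappa$-filtered diagram of contractible complexes whose colimit is pure acyclic but not contractible, using that pure acyclic complexes are precisely the filtered colimits of contractible complexes and that non-contractible pure acyclic complexes of arbitrarily large cardinality exist over $\Z$. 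The main obstacle in the first route is purely bibliographic/technical: correctly pinning down the statement ``$K(\Z)$ is not well generated'' and the implication ``presentable stable $\infty$-category $\Rightarrow$ well generated homotopy category''. In the second route the delicate points are (i) arranging the counterexample as a genuinely $\kappa$-filtered (not merely filtered) diagram at arbitrarily large cardinality, and (ii) bridging from ``admits a combinatorial model'' to a statement about colimits computed in $\Chz$ itself, since a Quillen-equivalent combinatorial model category need not have the same underlying colimits -- this is exactly what the accessible-localization reformulation is needed for. I would present the first route as the proof and the second as a remark.
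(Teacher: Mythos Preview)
Your primary route is essentially the paper's own proof: identify the homotopy category as $K(\Z)$, observe that a presentable stable $\infty$-category has well-generated homotopy category, and invoke the classical fact that $K(\Z)$ is not well generated. The paper cites \cite{lurie-higher-algebra} (Corollary 1.4.4.2) for the implication and \cite{christensen-hovey} (5.4) for the non-well-generation of $K(\Z)$, whereas you sketch the latter and point to \cite{htt} \S5.5 for the former---a minor bibliographic divergence you yourself flag, but the argument is the same.
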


\begin{proof}
Corollary 1.4.4.2 in \cite{lurie-higher-algebra} implies that the homotopy category of a presentable stable $\infty$-category must be well-generated as a triangulated category. However, the homotopy category of $\Chz$ is $K(\mathbb{Z})$, and is known not to be well-generated as discussed in \cite{christensen-hovey} (5.4). If there was any combinatorial model for this homotopy theory, it would imply $K(\mathbb{Z})$ is well-generated, a contradiction. 
\end{proof}

The same argument implies that the $r$-model structures of \ref{subsec:dga} cannot, in general, admit combinatorial models.

\begin{proposition}
The homotopy theory encoded by the AHRT model structure on $Cat$ of \cite{ahrt} is a non-presentable $\infty$-category and cannot admit any combinatorial model.
\end{proposition}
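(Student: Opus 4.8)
The plan is to follow the same strategy used just above for the absolute model structure on $\Chz$ and for the $r$-model structures, replacing the stable obstruction ``$K(\Z)$ is not well-generated'' by one suited to the non-stable, non-additive setting of $\Cat$. By Proposition A.3.7.6 of \cite{htt} a combinatorial model category presents a presentable $\infty$-category, and a Quillen equivalence induces an equivalence on associated $\infty$-categories; so it suffices to prove that the $\infty$-category presented by the AHRT model structure on $\Cat$ is not presentable. Since every functor is a weak equivalence there, and since a morphism of a model category is a weak equivalence exactly when it becomes invertible in the homotopy category, this homotopy category is a groupoid and the associated $\infty$-category is an $\infty$-groupoid; the same is then forced on any model category Quillen equivalent to it.

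First I would describe this $\infty$-category explicitly: because all maps are weak equivalences, it is the localization of the $1$-category $\Cat$ of all small categories at the class of all of its morphisms, that is, the $\infty$-groupoid completion of $\Cat$. Next I would extract from this description a property incompatible with presentability. This is where \cite{dugger} (Proposition 2.3) enters: a combinatorial model of this homotopy theory would have weak equivalences --- which, by the reduction above, are again \emph{all} of its maps --- and trivial fibrations closed under sufficiently large $\lambda$-filtered colimits, and the goal is to show that no locally presentable category carrying a model structure with that closure property can realize the homotopy theory of $\Cat$. Since $\Cat$ is locally presentable as a $1$-category (as recorded earlier in this section), cofibrant generation is the only obstacle to combinatoriality for $\Cat$ equipped with some model structure; the substantive point is to rule out combinatorial presentations with \emph{any} underlying category. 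Combining the two steps proves the proposition.

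The main obstacle is the second step: isolating the precise feature of the homotopy theory of $\Cat$ that a presentable $\infty$-category cannot have. In the stable cases the criterion is ready-made --- \cite{lurie-higher-algebra} (Corollary 1.4.4.2) supplies well-generatedness of the homotopy category, which $K(\Z)$ visibly fails --- whereas here the obstruction must be teased out of the interplay between the large $1$-category $\Cat$, the requirement of local presentability, and the closure of weak equivalences under large filtered colimits from \cite{dugger}. I expect that making this interplay precise, and not any of the surrounding formal reductions, is where the real work lies.
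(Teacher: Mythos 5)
There is a genuine gap: you have set up the formal reductions correctly but, as you yourself acknowledge in the last paragraph, you have not produced the obstruction to presentability, and that obstruction is the entire content of the paper's proof. The paper's argument is: in any combinatorial model category the classes of weak equivalences and trivial fibrations are closed under $\lambda$-filtered colimits for $\lambda$ large enough (\cite{dugger}, Proposition 2.3), so in particular the class of trivially fibrant objects is closed under such colimits; but Proposition 3.4 of \cite{ahrt} exhibits, for the AHRT structure on posets, a failure of this closure for \emph{every} $\lambda$, and Proposition 3.5 of \cite{ahrt} does the same for $\Cat$. The paper then reads this as the statement that the class of equivalences of the associated $\infty$-category is not accessible, which rules out presentability and hence any combinatorial model. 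Your proposal cites \cite{dugger} but never identifies the specific class (the trivially fibrant objects, i.e.\ those $X$ for which $X\to \mathbf{1}$ is a topological functor) whose failure of closure under large filtered colimits does the work, nor the specific results of \cite{ahrt} that establish it. Without that input the proof does not exist.

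Worse, the explicit description you propose to exploit --- ``the $\infty$-category is the $\infty$-groupoid completion of $\Cat$'' --- is a dead end. The localization of a $1$-category at all of its morphisms is the $\infty$-groupoid given by its classifying space, and $\Cat$ has a terminal object, so that $\infty$-groupoid is contractible and therefore presentable. The same applies to any of the diagram categories in sight. So no obstruction to presentability can be extracted from the bare localization; one must instead work, as the paper does, with the accessibility properties that a combinatorial presentation would force on the pair (category, class of weak equivalences) --- concretely, with the closure of the trivially fibrant objects under $\lambda$-filtered colimits --- and import the counterexamples of \cite{ahrt} (3.4 and 3.5). Your instinct that ``the real work lies'' in this step is right, but the step is not a routine tightening of the formal reductions you gave; it requires a different invariant than the one you chose.
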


\begin{proof}
Consider the class of trivially fibrant objects in the category $Pos$ of posets, with the AHRT model structure. Proposition 3.4 of \cite{ahrt} implies that this class is not closed under $\lambda$-filtered colimits, no matter how large $\lambda$ is allowed to become. On the level of the $\infty$-category associated to $Pos$, this implies the class of equivalences is not accessible. It follows that the $\infty$-category cannot be presentable, hence cannot admit any combinatorial model. The argument of \cite{ahrt} (Proposition 3.5) implies the same conclusion for $Cat$ with the AHRT model structure.
\end{proof}

Pro-categories pro$-\cat C$ are not presentable, and often not copresentable either \cite{htt} (Chapter 7), hence the homotopy theory of Section \ref{subsec:pro} is not presentable either.

\begin{remark}
There are several other examples of non-cofibrantly generated model structures, that we did not include because we did not know whether or not they were monoidal model categories. For example, all of the following model structures are not cofibrantly generated:
\begin{enumerate}
\item  the trivial model structure of \cite{lack} (Proposition 4.18) on the 2-category of arrows in $Cat$,
\item  the projective model structure of \cite{bcr} (Theorem 2.4) on small functors from a simplicial category $\cat K$ to simplicial sets, 
\item the model structure of \cite{chorny} (Theorem 1.2) on the arrow category of simplicial sets, 
\item the localization of the category of small functors from $sSet$ to itself constructed in \cite{chorny-rosicky} (Example 3.16),
\item the weak factorization system on $Cat$ of \cite{bourke-garner} (Section 6.2)

\end{enumerate}

As categories of functors, the first four examples above can be endowed with the Day convolution product. However, the authors do not know if this product turns these model structures into monoidal model categories.
\end{remark}

\section{Application to Cubical Homotopy Theory} \label{sec:applications}

\subsection{Cubical Diagram Categories of Monoidal Model Categories}\label{iterate}

Cubical homotopy theory is an alternative to simplicial homotopy theory that has recently found powerful applications in Goodwillie calculus \cite{munson-volic}, Blakers-Massey theorems \cite{ching-harper}, homotopy type theory \cite{awodey, cisinski, cohen}, rewriting theory \cite{lucas}, concurrency theory \cite{meadows}, the homotopy theory of $C^*$-algebras \cite{ostvaer}, and classical homotopy theory \cite{brown}. Jardine \cite{jardine-cubical} produced the first model structure on cubical sets, and pointed out several advantages of the cubical setting over simplicial sets. 

The idea of cubical sets is to replace the simplex category $\Delta$ by the cubical category $\boxprod$. A cubical set is a functor $X:\boxprod^{op}\to Set$, i.e. a collection of sets $(X_n)_{n\in \mathbb{N}}$, where $X_n$ is thought of as the set of $n$-cubes. Jardine \cite{jardine-cubical} proved that the homotopy theory of cubical sets agrees with that of topological spaces. Just as one can consider simplicial objects in a model category $\M$, so can one consider cubical objects in $\M$, namely functors $X:\boxprod^{op}\to \M$, where $X_n$ encodes $n$-cubes in $\M$.

As a category of functors, the category of cubical objects in $\M$ admits the Day convolution product. The monoidal structure on cubical objects has had powerful applications in numerous settings, notably in \cite{brown}, \cite{isaacson}, and \cite{ostvaer}. This product agrees with the levelwise product, where the product in level $n$ is the pushout product, obtained by viewing the category of $n$-cubes in $\M$ as the arrow category of the category of $n-1$ cubes. We denote by $\M^{I^{\times n}}$ the category of $n$-cubes; its objects are commutative $n$-cubes in $\M$ and its morphisms are commutative $n+1$ cubes. For example, $\M^{I^{\times 2}}$ is the arrow category of $\arrowmppproj$. Its objects are commutative squares in $\M$ and its morphisms are commutative cubes in $\M$. An example of such a morphism  $\gamma : (f_V \to f_W) \to (f_X \to f_Y)$ is displayed below.
\[\begin{small}
\nicexy@R-.4cm{V_0 \ar[dd]_-{f_V} \ar[rr]^-{\alpha_0} \ar[dr]|-{\gamma_{00}} && W_0 \ar[dr]^-{\gamma_{10}} \ar'[d][dd]|(.55){f_W} &\\
& X_0 \ar[dd]|(.25){f_X} \ar[rr]|(.25){\beta_0} && Y_0 \ar[dd]^-{f_Y}\\
V_1 \ar[dr]_-{\gamma_{01}} \ar'[r][rr]|(.5){\alpha_1} && W_1 \ar[dr]|-{\gamma_{11}} &\\
& X_1 \ar[rr]|-{\beta_1} && Y_1}
\end{small}\]
The map $\gamma$ is a weak equivalence (resp., fibration) in $\M^{I^{\times 2}}$ with the projective model structure if and only if each of the four maps $\gamma_{ij}$ is a weak equivalence (resp., fibration) in $\M$. The projective model structure on $\M^{I^{\times n}}$, for $n > 2$, is defined similarly.

An inductive argument, using Theorem \ref{thm:main} for the base $n=1$, verifies the pushout product axiom on $\M^{I^{\times n}} = \Arr(\M^{I^{\times (n-1)}})$ by appealing to Theorem \ref{thm:main} applied to $\M^{I^{\times (n-1)}}$. This proves Corollary \ref{cor:main}.

Despite the failure of the model structures in Section \ref{sec:not-cg} to be cofibrantly generated, Corollary \ref{cor:main} allows for a monoidal cubical homotopy theory to be built in these settings. We conclude with a corollary, summarizing the considerations of the previous two sections.

\begin{corollary}
Let $\M$ be any of the examples in Section \ref{sec:not-cg}, i.e. $\Chz$, $dgRmod_r$, $Cat$, $Top$, or pro-$\cat C$ (for a tensor model category $\cat C$). Then, by Corollary \ref{cor:main}, the projective model structure on each $\M^{I^{\times n}}$ satisfies the pushout product axiom for every $n$, and hence so does $\M^{\boxprod^{op}}$.
\end{corollary}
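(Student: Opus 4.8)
The plan is to combine three ingredients that are already in place: the list of non-cofibrantly generated monoidal model categories from Section~\ref{sec:not-cg}, the iterated-arrow-category result Corollary~\ref{cor:main}, and the identification recalled in Section~\ref{sec:applications} of the Day convolution product on cubical objects with the levelwise pushout product on the cube categories $\M^{I^{\times n}}$.

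First I would record that each entry on the list is a symmetric monoidal model category satisfying the pushout product axiom: $\Chz$ with the absolute model structure, $dgRmod_r$ with the $r$-model structure, $Cat$ with the AHRT model structure under Cartesian product, and $Top$ with the Str{\o}m model structure are monoidal model categories by the references in Section~\ref{sec:not-cg}. The case of pro-$\cat C$ needs a small remark, since pro-$\cat C$ is only a tensor model category and is almost never closed, so Corollary~\ref{cor:main} as literally stated does not apply to it. However, the proof of Theorem~\ref{arrowcat-ppax}, and hence the induction proving Corollary~\ref{cor:main}, never uses the closed structure: it manipulates only pushout products, pushout corner maps, colimits, and the pushout product axiom of the ground category. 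So the conclusion that each $\M^{I^{\times n}}$ satisfies the pushout product axiom holds for every tensor model category, pro-$\cat C$ included. Applying Corollary~\ref{cor:main} in this mildly extended form yields the first assertion: for each $\M$ on the list and every $n$, the projective model structure on $\M^{I^{\times n}}$ satisfies the pushout product axiom.

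It remains to pass from the finite cubes to $\M^{\boxprod^{op}}$. The key input, recalled in Section~\ref{sec:applications}, is that the Day convolution product on cubical objects is computed one level at a time and that in level $n$ it is exactly the pushout product on $\M^{I^{\times n}} = \Arr(\M^{I^{\times (n-1)}})$; correspondingly, a map of cubical objects is a (trivial) cofibration precisely when its relative latching maps are (trivial) cofibrations in $\M$, and these latching maps are governed by the cube categories $\M^{I^{\times n}}$. (The relevant model structure on $\M^{\boxprod^{op}}$ exists without cofibrant generation, since $\boxprod$ is a Reedy category and the latching/matching machinery of \cite{hovey} (5.2.5) applies to $\M$ directly.) Running a pushout product $f \boxprod g$ of (trivial) cofibrations of cubical objects through this level-by-level description reduces the pushout product axiom for $\M^{\boxprod^{op}}$ to the pushout product axiom for each $\M^{I^{\times n}}$, established in the previous paragraph.

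The citations in the first two paragraphs are routine; the genuine obstacle is making precise the phrase ``the Day convolution product agrees with the levelwise pushout product'' together with its cofibration counterpart. Concretely, one must check that the latching maps of a pushout product of cubical objects decompose into pushout products of latching maps in a way that lets the axiom on each $\M^{I^{\times n}}$ be applied termwise, and in particular that the degeneracies of $\boxprod$ — which is what distinguishes it from the direct categories $\{0 \to 1\}^{\times n}$ underlying the $\M^{I^{\times n}}$ — do not obstruct this decomposition. Once that bookkeeping is arranged, the pushout product axiom for $\M^{\boxprod^{op}}$, and hence the corollary, follows formally.
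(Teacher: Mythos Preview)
Your approach matches the paper's, but you should know that the paper offers no separate proof of this corollary at all: the words ``by Corollary~\ref{cor:main}'' are embedded in the statement, and the passage to $\M^{\boxprod^{\mathrm{op}}}$ is justified only by the one-sentence assertion earlier in Section~\ref{sec:applications} that the Day convolution product ``agrees with the levelwise product, where the product in level $n$ is the pushout product.'' So the two subtleties you flag --- that pro-$\cat C$ is not closed and hence falls outside the literal hypotheses of Theorem~\ref{arrowcat-ppax}, and that the degeneracies in $\boxprod$ mean the cubical Reedy latching story is not simply a disjoint union of the $\M^{I^{\times n}}$ --- are points the paper glosses over rather than resolves. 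Your observation that the proof of Theorem~\ref{arrowcat-ppax} never invokes the internal hom is correct and is the right patch for the first issue; your candor that the second issue requires unspecified bookkeeping is accurate, and the paper supplies none of that bookkeeping either.
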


\textbf{Acknowledgments:} The authors would like to thank the referee for several helpful suggestions, and would like to thank Dmitri Pavlov and Jakob Scholbach for pointing out their result in \cite{ps}.

\end{document}